\def\Pic 		{{\rm Pic}}
\def\vol		{{\rm vol}}
\def\ZZ                 {{\mathbb Z}}
\def\PP                {{\mathbb P}}
\def\CC                 {{\mathbb C}}
\newtheorem{lemma}{Lemma}[section]
\newtheorem{theorem}[lemma]{Theorem}
\newtheorem{corollary}[lemma]{Corollary}
\newtheorem{proposition}[lemma]{Proposition}
\theoremstyle{definition}
\newtheorem{definition}[lemma]{Definition}
\newtheorem{example}[lemma]{Example}
\newtheorem{remark}[lemma]{Remark}
\theoremstyle{remark}
\newtheorem*{proof*}{Proof}
\numberwithin{equation}{section}
\title{On complete intersections with trivial canonical class}
\author{Lev A. Borisov and Zhan Li}
\address{Mathematics Department, Rutgers University, 110 Frelinghuysen Rd, Piscataway, NJ 08540, USA}
\email{borisov@math.rutgers.edu, lizhan@math.rutgers.edu}
\begin{document}
\begin{abstract}
We prove birational boundedness results on 
complete intersections with trivial canonical class of base point free divisors in (some version of) Fano 
varieties. Our results imply in particular
that Batyrev-Borisov toric construction produces only a bounded set of  Hodge numbers 
in any given dimension, even as the codimension is allowed to grow.
\end{abstract}

\maketitle

\section{Introduction}\label{intro}
Calabi-Yau manifolds are fascinating algebraic varieties which have been actively studied for the 
last 30 years. Interest in Calabi-Yau manifolds comes from at least two sources. First, these are 
higher dimensional analogs of  the widely studied elliptic curves and K3 surfaces. Second, they have 
been prominently featured in string theory 
as possible target spaces. In string theory applications, the case of dimension three is especially
relevant and much work has been done on constructing Calabi-Yau threefolds, see for example
\cite{BatMir,BB1,BN,CD,GP,Hua,Kapustka,KS,Rodland,Tonoli,Voisin}.

\medskip
There is currently no classification of Calabi-Yau threefolds,  nor does it appear to be within reach. In fact, 
it is not known whether they form a bounded set or even if their Hodge numbers are bounded,
apart from the elliptically fibered case \cite{G}.
This paper shows that one of the most prolific constructions of Calabi-Yau threefolds can only produce
a birationally bounded set of them.

\medskip
Known constructions of Calabi-Yau varieties fall into two general categories: ``toric" and ``sporadic".  We are mostly concerned with the former.
Batyrev's construction \cite{BatMir} of Calabi-Yau threefolds as crepant resolutions of generic hypersurfaces in Gorenstein Fano toric 
fourfolds yields the vast majority of known examples.  Kreuzer and Skarke's classification 
\cite{KS} of Gorenstein toric Fano fourfolds yielded 473,800,776 potentially different families of Calabi-Yau threefolds,
of which at least 30,108 are provably different, as measured by the Hodge number invariants. 

\medskip
A generalization of Batyrev's hypersurface 
construction, known as Batyrev-Borisov construction \cite{BaBo}, considers generic complete intersections 
of $k$ hypersurfaces in Gorenstein toric Fano varieties of dimension $k+3$. While for any $k$ one is guaranteed
to have only a finite number of families, it has been long expected that for $k$ large enough these complete intersections will not produce 
essentially new varieties.  However, this has never been proved, and our paper 
establishes this (and in fact a stronger) result which we will state below after making some preliminary
definitions.

\medskip
We call a variety a bpf-big Fano (see Definition \ref{bpfbig}) if it is normal with Gorenstein canonical singularities and base point  free 
and big anticanonical divisor. Also recall that a set of varieties $\mathfrak S$ is called birationally bounded if there exists a projective morphism 
$Z\to T$ with $T$ of finite type so that for every $X\in \mathfrak S$ there exists $t\in T$ with 
fiber $Z_t$ birational to $X$.

\medskip
We can now state the main result of this paper.

\smallskip
\noindent
{\bf Theorem \ref{main}.}
Consider the set of $n$-dimensional varieties $X_0$ with Gorenstein canonical singularities and trivial canonical class which can be 
written as a connected component of a scheme-theoretic intersection of 
base point free divisors $F_i\in |D_i|$ in an $(n+k)$-dimensional bpf-big Fano variety $P$
with $\sum_{i=1}^k D_i =-K_P$. Then this set is birationally bounded for any $n$ irrespective of $k$. 

\medskip
The proof of the above theorem is based on two main ideas. First, we use difficult results of \cite{2} 
to prove that the space of bpf-big Fano varieties with Gorenstein canonical singularities with base point free 
anticanonical class is birationally
bounded in any dimension. Then we show that a connected component of a generic (weak) Calabi-Yau complete intersection of dimension $n$ 
in an $n+k$ dimensional bpf-big Fano variety can be realized as a connected component of a complete intersection in $n+l$ dimensional 
variety with $l\leq n$. This allows us to prove birational 
boundedness of (components of) generic complete intersections of this type. Finally, we 
pass from the generic case to the special one by finding a big base point free divisor of bounded volume.

\medskip
The immediate corollary of Theorem \ref{main} is the following result, which was the main motivation
behind our project.

\smallskip
\noindent
{\bf Corollary \ref{finitebabo}.} 
Batyrev-Borisov construction produces only a birationally bounded set of Calabi-Yau varieties in any given dimension. In particular, the stringy Hodge numbers of varieties obtained by Batyrev-Borisov 
constructions are bounded in any given dimension.

\medskip
The paper is organized as follows. In Section \ref{sec.term} we set up the terminology.
In Section \ref{boundsec} we use the results and techniques of 
\cite{2} to prove birational boundedness of bpf-big Fano varieties of given dimension and birational
boundedness of 
varieties with trivial canonical class and big base point free divisor of bounded volume. In Section 
\ref{secci} we establish basic results about complete intersections in bpf-big Fano varieties. 
In Section \ref{secmain} we prove the main results of the paper. Finally, we use Section \ref{seccomm}
to state some consequences and raise open questions related to this paper's results. 

\medskip
{\bf Acknowledgments.} We thank Chenyang Xu for useful comments and references. This research 
was partially supported by the NSF grant DMS-1201466.

\section{Terminology}\label{sec.term}

In this section we set up terminology which we will use in the subsequent sections. All of our varieties 
are defined over $\CC$ and are projective.

\medskip
We start by recalling birational boundedness and boundedness of  sets of varieties.
\begin{definition}\label{bound}
A set of varieties $\mathfrak S$ is called birationally bounded (resp. bounded) if there is a projective
morphism 
$Z\to T$ with $T$ of finite type so that for every $X\in \mathfrak S$ there exists $t\in T$ with 
fiber $Z_t$ birational (resp. isomorphic) to $X$.
\end{definition}

\medskip
We now define versions of Calabi-Yau and Fano conditions relevant to this paper.
\begin{definition}\label{wCY}
A complex projective variety $X$ is called  weak Calabi-Yau variety if it is normal, has trivial canonical class and Gorenstein canonical 
singularities.
\end{definition}

\begin{remark}
Note that we do not require $h^i(X,{\mathcal O}_X)=0$ for $0<i<\dim X$ and allow mild singularities.
\end{remark}

\begin{definition} \label{bpfbig}
A complex projective variety $P$  is called a bpf-big Fano variety if the following 
conditions are satisfied.
\begin{itemize}
\item $P$ is normal,
\item singularities of $P$ (if any) are Gorenstein and canonical,
\item $-K_P$ is a base point free Cartier divisor on $P$,
\item $-K_P$ is big.
\end{itemize}
\end{definition}

\begin{remark}
The bpf-big Fano condition can be compared with the more common weak Fano condition,
which assumes that $-K_P$ is nef and big. On one hand, we require $-K_P$ to be base point
free. On the other hand, 
we do not require $P$ to be smooth, contrary to the common definition of weak Fano varieties.
\end{remark}

\smallskip
The following is the prototypical example that was our main motivation in considering bpf-big Fano varieties.
\begin{example}
For a reflexive polytope $\Delta$ in a lattice $M$ the
toric variety ${\rm Proj}(\bigoplus_{l\geq 0} \CC[l\Delta\cap M])$ and any of its partial projective crepant resolutions are bpf-big Fano 
varieties. Indeed, toric singularities are normal and Cohen-Macaulay \cite{Ho}
and the reflexivity assures that the anticanonical class of ${\rm Proj}(\bigoplus_{l\geq 0} \CC[l\Delta\cap M])$ is
the class of the ample tautological line bundle $\mathcal O(1)$, see \cite{Batduke}.
It can be shown \cite[Proposition 1.2]{Mav} that this construction describes all toric bpf-big Fano varieties, so there are only a finite number of them in any given dimension.
\end{example}

\section{Bounds on bpf-big Fano varieties and weak Calabi-Yau varieties with big 
base point free divisor of bounded volume}\label{boundsec}

In this section we prove that the set of bpf-big Fano varieties is birationally bounded in any dimension. 
Our technique uses difficult results of \cite{2} on boundedness of pairs, although earlier results might be 
sufficient, since we are helped by the base point free condition. We also prove that the set 
of weak Calabi-Yau varieties of given dimension is birationally bounded under the assumption of 
existence of a big base point 
free Cartier divisor of bounded volume. As a consequence we deduce that  the set of possible stringy $E$-functions of weak Calabi-Yau varieties of fixed dimension with base point free divisor of bounded volume is finite.
These results will be instrumental in the subsequent sections.

%\begin{remark}
%Because we only consider projective varieties, they form a set, rather than a class.
%\end{remark}

\smallskip
The first  result of this section is the following boundedness theorem.
\begin{theorem}\label{boundFano}
For any given $n$ the set ${\mathfrak M}_n$ of $n$-dimensional bpf-big Fano varieties is birationally bounded.
In addition, $(-K_P)^n$ is bounded from above
by a constant that depends only on $n$.
\end{theorem}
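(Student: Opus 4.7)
My plan is to reduce the problem to the boundedness theorem for Fano varieties with controlled singularities proved in \cite{2}, applied to the anticanonical model of $P$. Since $-K_P$ is base point free (hence semi-ample) and big, the anticanonical ring $R(P,-K_P)=\bigoplus_{m\geq 0}H^0(P,-mK_P)$ is finitely generated; set $P^{\rm can}:={\rm Proj}\,R(P,-K_P)$ and let $\phi\colon P\to P^{\rm can}$ be the induced ample model morphism. Then $P^{\rm can}$ is normal projective of dimension $n$, the morphism $\phi$ is birational, and $-K_P=\phi^*H$ for some ample $\QQ$-Cartier $\QQ$-divisor $H$ on $P^{\rm can}$.

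Next I would verify that $-K_{P^{\rm can}}$ is $\QQ$-Cartier and ample, and that $P^{\rm can}$ has canonical singularities. Pushing the equality $-K_P=\phi^*H$ forward along the birational morphism $\phi$ between normal varieties gives $-K_{P^{\rm can}}=H$ as Weil divisors, so $-K_{P^{\rm can}}$ is ample $\QQ$-Cartier and $\phi^*K_{P^{\rm can}}=K_P$, i.e.\ $\phi$ is crepant. On a common resolution of $P$ and $P^{\rm can}$ the crepancy identity then shows that the discrepancy over $P^{\rm can}$ of any divisorial valuation equals its discrepancy over $P$, which is nonnegative since $P$ has canonical singularities; hence $P^{\rm can}$ also has canonical singularities.

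With these properties in hand I would invoke \cite{2}: $n$-dimensional Fano varieties with $\epsilon$-lc singularities form a bounded family for any fixed $\epsilon>0$, and canonical singularities are $1$-lc, so the varieties $P^{\rm can}$ arising from members of $\mathfrak M_n$ form a bounded family. Boundedness of $\{P^{\rm can}\}$ gives a uniform upper bound on $(-K_{P^{\rm can}})^n$, and the projection formula $(-K_P)^n=(\phi^*H)^n=H^n=(-K_{P^{\rm can}})^n$ transfers this bound to $(-K_P)^n$, yielding the volume estimate in terms of $n$ alone. Since each $P$ is birational to its anticanonical model $P^{\rm can}$, birational boundedness of $\mathfrak M_n$ follows.

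The step I expect to require the most care is verifying that the anticanonical model $P^{\rm can}$ inherits canonical singularities, since \cite{2} requires controlled singularities. It is precisely the base point free hypothesis on $-K_P$ that makes the ample model morphism $\phi$ directly available and yields the identity $-K_P=\phi^*H$, so crepancy of $\phi$ is immediate and no further MMP argument is needed to analyze $P^{\rm can}$.
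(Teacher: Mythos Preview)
Your strategy is mathematically coherent, but the key citation is wrong, and this is a genuine gap. The statement you invoke from \cite{2}---that $n$-dimensional $\epsilon$-lc Fano varieties form a bounded family---is the Borisov--Alexeev--Borisov conjecture, and it is \emph{not} proved in \cite{2} (Hacon--McKernan--Xu, ``ACC for log canonical thresholds''). BAB was established only later by Birkar. With Birkar's theorem in hand your argument does go through: the passage to the anticanonical model $P^{\rm can}$, the verification that $\phi$ is crepant so that $P^{\rm can}$ inherits canonical singularities, and the transfer of the volume bound by the projection formula are all fine. But as written, the step ``invoke \cite{2}'' fails.

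The paper, written before Birkar's work, takes a different route that uses only what \cite{2} actually contains. It chooses a generic irreducible $E\in|\!-\!2K_P|$ and checks, using the base point free hypothesis and a log-resolution, that $(P,\tfrac12 E)$ and $(P,\tfrac23 E)$ are klt. Since $K_P+\tfrac12 E\equiv 0$, Theorem~B of \cite{2} (upper bound on the volume for klt pairs with $K+\Delta\equiv 0$ and DCC coefficients) gives the bound on $(-K_P)^n$. Since $K_P+\tfrac23 E\equiv -\tfrac13 K_P$ is big, Theorem~C of \cite{2} (effective birationality of $|m(K+\Delta)|$ for log canonical pairs with DCC coefficients and $K+\Delta$ big) yields a uniform $m$ with $|\!-\!mK_P|$ birational, hence birational boundedness. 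Your anticanonical-model route is conceptually cleaner once BAB is available, but the paper's auxiliary-boundary argument is what the cited reference actually supports.
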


\begin{proof}
We may assume that $n\geq 2$, since $n=1$ case is obvious.

\smallskip
Suppose $P\in {\mathfrak M}_n$. Then since $-2K_P$ is big and base point free,
the generic elements of $|\hskip -2pt -2K_P|$ are irreducible (\cite[Theorem I.6.3(4)]{J}). We also claim that for generic $E\in |-2K_P|$, 
the pairs
$(P,\frac 23 E)$ and $(P,\frac 12 E)$ are Kawamata log-terminal, see the argument in the next paragraph.

\smallskip
Indeed, let $\pi:\hat P\to P$ be a resolution of singularities whose exceptional divisor
$\bigcup_i F_i$ has simple normal crossings.
Denote by $V_i=\pi(F_i)\subset P$ the images of the components of the exceptional divisor. 
Base point free condition on $(-K_P)$
ensures that the generic element $E$ of $|\hskip -2pt -2K_P|$ intersects $V_i$ transversely, thus 
the strict transform $\hat E$ of $E$ coincides with the pullback $\pi^*(E)$.
Since $\pi^*|\hskip -2pt -2K_P|$ is a base point free linear system, the generic member is smooth and transversal to
$F_i$. Thus $\pi$ provides a log-resolution of $(P,\alpha E)$ for any $\alpha$ and
discrepancies can be calculated directly. The fact that $P$ has 
canonical and hence log-terminal singularities, together with $\alpha <1$ finishes the argument.

\smallskip
We now invoke Theorem B of \cite{2} which states that for 
Kawamata log-terminal pairs $(P,\Delta)$ of dimension $n$ with $K_P+\Delta$ numerically trivial and
coefficients of $\Delta$ in a subset of $[0,1)$ that satisfies DCC condition, the set 
$\vol (P,\Delta)$ is bounded from above. In our setting $K_P+\frac 12 E=0$, the set $\{\frac 12\}$ 
trivially satisfies DCC condition, and base point free condition on $-K_P$ ensures that 
$\vol(P,\frac 12 E)= (-K_P)^n$. We thus conclude that $(-K_P)^n$ is bounded from above.

\smallskip
We now use Theorem C of \cite{2}, which says that for log-canonical pairs $(P,\Delta)$
with coefficients in a DCC subset of $[0,1]$ with $K_P+\Delta$ big, there exists a constant
$m$ that depends only on $n$ and the aforementioned subset such that $|m(K_P+\Delta)|$ gives a birational map. We apply it to $(K_P,\frac 23 E)$
to show that there is $m$ that depends on $n$ only such that $|\hskip -2pt -mK_P|$ gives a birational morphism
$P\to W$. By taking a linear projection if necessary, we can assume that $W$ is a subvariety of bounded degree in a projective space of bounded dimension. Thus we can pick the appropriate union of Chow varieties
to be the base of the family $T$ in Definition \ref{bound} and the claim follows.
\end{proof}

\smallskip
We will now use similar techniques to bound weak Calabi-Yau varieties. Specifically,
for any $n\geq 0$ and $d>0$ let ${\mathfrak N}_{n,d}$ be 
the set of $n$-dimensional weak Calabi-Yau varieties $X$
that possess a base point free big Cartier divisor $D$ with $D^n\leq d$.
\begin{theorem}\label{boundCY}
For any $n$ and $d$ the set ${\mathfrak N}_{n,d}$ is birationally bounded.
\end{theorem}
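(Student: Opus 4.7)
The plan is to mimic the proof of Theorem \ref{boundFano}, with the given base point free big Cartier divisor $D$ playing the role previously occupied by $-K_P$. The key simplification is that $K_X = 0$ for a weak Calabi--Yau $X$, so $D$ itself serves as (a representative of) the log-canonical divisor of a suitable auxiliary pair.

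First I would dispose of the trivial cases $n \leq 1$ and assume $n \geq 2$. For $X \in \mathfrak{N}_{n,d}$ with the given divisor $D$, pick a generic $E \in |2D|$. Since $|2D|$ is base point free and $2D$ is big, the associated morphism has image of dimension $n \geq 2$, so by Bertini $E$ is irreducible and reduced; moreover, on a log resolution $\pi \colon \hat{X} \to X$ one can arrange that $\pi^* E$ is smooth and meets the exceptional divisor transversely. Exactly as in the proof of Theorem \ref{boundFano}, this supplies a log resolution of $(X, \alpha E)$ for any $\alpha$, and since $X$ has canonical singularities the pair $(X, \alpha E)$ is Kawamata log-terminal for every $\alpha < 1$.

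Next I would apply Theorem C of \cite{2} to the klt pair $(X, \frac{1}{2} E)$; the single coefficient $\frac{1}{2}$ trivially satisfies the DCC condition. Because $K_X = 0$ and $E \sim 2D$, we have $K_X + \frac{1}{2} E \sim_{\QQ} D$, which is big by hypothesis. Theorem C then yields an integer $m = m(n)$, independent of $X$, such that $|mD|$ defines a birational map $\varphi \colon X \dashrightarrow W \subset \PP^N$. The image $W$ has dimension $n$ and degree at most $(mD)^n = m^n D^n \leq m^n d$.

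To conclude, a sufficiently generic iterated linear projection allows me to replace $W$ by a subvariety of $\PP^{2n+1}$ of the same degree. The Chow variety of $n$-dimensional subvarieties of $\PP^{2n+1}$ of degree at most $m^n d$ is of finite type, and its universal family $Z \to T$ witnesses birational boundedness of $\mathfrak{N}_{n,d}$. I do not foresee a substantive obstacle: the argument is essentially a transcription of that of Theorem \ref{boundFano}, with the bigness of $K_X + \frac{1}{2} E$ made immediate by $K_X = 0$, so that no separate bound on the volume of the log-canonical divisor is needed.
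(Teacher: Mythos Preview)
Your proposal is correct and follows essentially the same route as the paper. The only cosmetic difference is that the paper picks a generic $E\in|D|$ and applies \cite[Theorem C]{2} to the log-canonical pair $(X,E)$ with coefficient set $\{1\}$, whereas you take $E\in|2D|$ and work with the klt pair $(X,\tfrac12 E)$; either choice yields $K_X+\Delta\sim_{\QQ} D$ and the same bound $\deg W\leq m^n d$ in $\PP^{2n+1}$.
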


\begin{proof}
Similar to the proof of Theorem \ref{boundFano}, we pick a generic member $E$ of $|D|$ and use
\cite[Theorem C]{2} to bound $m$ such that $\phi_{mD}$ is birational. Then a linear projection of the image of $\phi_{mD}$ is a variety of bounded degree $(\leq m^nd)$ in a projective space of bounded dimension $(\leq 2n+1)$. 
\end{proof}

\begin{corollary}\label{boundhodge}
For given $n$ and $d$, the set of stringy $E$-functions \cite{Batyrev-stringy} 
 $E_{st}(X;u,v)$ of $X\in {\mathfrak N}_{n,d}$ is finite.
\end{corollary}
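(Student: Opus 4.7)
The proof plan relies on two key properties of the stringy $E$-function in conjunction with Theorem \ref{boundCY}: its birational invariance on weak Calabi-Yau varieties, and its constructibility on bounded families of weak Calabi-Yaus.

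First, I would establish birational invariance: if $X, X' \in \mathfrak{N}_{n,d}$ are birational, let $W$ be a common log resolution, smooth with maps $\rho : W \to X$ and $\rho' : W \to X'$. Since $K_X = K_{X'} = 0$, both $K_W - \rho^* K_X$ and $K_W - \rho'^* K_{X'}$ coincide with $K_W$ itself, so the same prime divisor components and the same discrepancies enter Batyrev's formula whether one computes from $X$ or from $X'$. Hence $E_{st}(X) = E_{st}(X')$, and it suffices to bound the number of $E_{st}$-values across birational equivalence classes in $\mathfrak{N}_{n,d}$.

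By Theorem \ref{boundCY}, there is a projective morphism $Z \to T$ with $T$ of finite type such that every $X \in \mathfrak{N}_{n,d}$ is birational to some fiber $Z_t$. I would argue by Noetherian induction on $T$, treating one irreducible component at a time: the goal is to find a dense open $U$ of each component on which the function $t \mapsto E_{st}(X)$, defined for any $X \in \mathfrak{N}_{n,d}$ birational to $Z_t$, is constant. Over such $U$, I would spread out a weak Calabi-Yau birational model of the generic fiber into a relative flat family $\tilde X \to U'$ over a generically \'etale cover $U' \to U$, so that each fiber $\tilde X_{u'}$ is a weak Calabi-Yau variety birational to $Z_u$ (where $u' \in U'$ lies over $u$). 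Stratifying $U'$ further, a simultaneous log resolution $\tilde W \to \tilde X$ exists with relatively SNC exceptional divisor and uniform discrepancy coefficients. The constructibility of Hodge-Deligne polynomials in smooth families then makes $E_{st}(\tilde X_{u'})$ constant on $U'$, and by birational invariance $E_{st}(X) = E_{st}(\tilde X_{u'})$ for every $X \in \mathfrak{N}_{n,d}$ birational to some $Z_t$ with $t \in U$.

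The principal obstacle is constructing the relative weak Calabi-Yau family $\tilde X \to U'$ extending a single weak Calabi-Yau model on one closed fiber. This step requires a spreading-out argument for birational maps in the setting of trivial canonical class, or a relative MMP-type construction producing a weak Calabi-Yau model of a resolution of $Z_U \to U$. Once such a relative family is available, the simultaneous log resolution and constructibility of Hodge-Deligne polynomials that yield constancy of $E_{st}$ are routine.
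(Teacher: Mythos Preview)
Your overall architecture---birational invariance of $E_{st}$ on weak Calabi-Yau varieties, combined with Noetherian induction on a bounding family---matches the paper's. The difference lies in how each handles the passage from the bounding family $Z\to T$ (whose fibers are merely birational to members of $\mathfrak{N}_{n,d}$, not themselves weak Calabi-Yau) to something on which $E_{st}$ can be computed constructibly.

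You propose to replace $Z\to T$ by a relative weak Calabi-Yau model $\tilde X\to U'$, and you correctly flag this as the principal obstacle: producing such a model fiberwise-birationally over a base is a substantial input (essentially a relative minimal model construction), and you do not carry it out. The paper sidesteps this entirely. After arranging the fibers $Z_t$ to be smooth, it observes that any $X\in\mathfrak{N}_{n,d}$ birational to $Z_t$ forces $h^0(Z_t,K_{Z_t})=1$; one then takes the unique effective divisor $D_t\in|K_{Z_t}|$ and works with the \emph{pair} $(Z_t,D_t)$. Since $K_X=0$, the pair $(X,0)$ is $K$-equivalent to $(Z_t,D_t)$, so $E_{st}(X;u,v)=E_{st}(Z_t,D_t;u,v)$ by \cite[Theorem~3.8]{Batlog}. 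Now one only needs to make $D_t$ relatively simple normal crossing over strata of $T$---a routine resolution step---after which Batyrev's formula for pairs exhibits $t\mapsto E_{st}(Z_t,D_t)$ as a constructible function.

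In short, the paper trades your difficult step (find a birational Calabi-Yau model of $Z_t$ in families) for a much easier one (keep $Z_t$, add the effective canonical divisor as boundary to make the log canonical class trivial, then resolve the boundary). Both routes reach the same conclusion, but the pair trick makes the argument self-contained without appealing to relative MMP or spreading out a minimal model.
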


\begin{proof}
By Theorem \ref{boundCY} and Definition \ref{bound} there exists a family
$Z\to T$ such that every member of $ {\mathfrak N}_{n,d}$ is birational to
a fiber $Z_t$ of it. We may assume the fibers to be smooth. Indeed, for any irreducible component of $T$
we can resolve the singularities of the general fiber: this will give us a resolution of fibers over a Zariski open subset of the component. We then replace $T$ by the disjoint union of these open sets and their complements, and repeat the procedure.

\smallskip
For every fiber $Z_t$ that is birational to some $X\in  {\mathfrak N}_{n,d}$ 
the dimension of $H^0(Z_t, K_{Z_t})$ is $1$, because global sections of the canonical class are a birational invariant of varieties with canonical singularities. By upper-semicontinuity, there is a Zariski locally closed 
subset of $t\in T$ where $h^0(Z_t, K_{Z_t})=1$ and we may replace $T$ by it. Now we similarly
resolve the singularities 
of the global sections of $K_{Z_t}$ to further assume that this is a simple normal crossing divisor (with integer multiplicities of the components). Then Batyrev's definition of stringy $E$-functions shows that 
$E_{st}(Z_t,K_{Z_t})$ is a constructible function and thus takes only a finite number of values (viewed as
a map $ T \to \ZZ(u,v)$). It remains to observe that $(Z_t,K_{Z_t})$ is $K$-equivalent to 
$(X,0)$ in the sense that they pull back to the same pair on a common log-resolution. Then
$$
E_{st}(X;u,v)= E_{st}(X,0;u,v) = E_{st}(Z_t,K_{Z_t};u,v)
$$
by \cite[Theorem 3.8]{Batlog}.
\end{proof}

\smallskip
\begin{remark}
It is known \cite{Kollar} that the set of true Fano varieties of fixed dimension $n$ is bounded. 
It is suggested by Xu \cite{X} that the same should be true for bpf-big Fano varieties. He has developed an approach to this statement, 
based on the arguments of \cite{BCHM}. This can presumably strengthen all of our results
from birational boundedness to boundedness in the sense of Definition \ref{bound}.
However, we choose to focus on birational boundedness results only since from the string theory point 
of view birational Calabi-Yau varieties are conjectured to correspond to different regions in the same family of field theories.
So birational boundedness and boundedness (conjecturally) translate into the same finiteness of the number of families of field theories.
\end{remark}

\section{Calabi-Yau complete intersections of base point free divisors in Fano varieties}
\label{secci}
In this section we establish basic facts about complete intersections in bpf-big Fano varieties 
that can be assured to
be weak Calabi-Yau varieties with at worst Gorenstein canonical singularities. 

%The reader should pay
% careful attention to the difference between generic complete intersections and just complete intersections.

\smallskip
We start by the following basic statement about complete intersections in bpf-big Fano varieties,
see Definition \ref{bpfbig}.
\begin{proposition}\label{genci}
Let $P$ be a bpf-big Fano variety of dimension $n+k$ and let 
$$ D_1+\cdots+ D_k = -K_P$$
be a decomposition of $-K_P$ into a sum of base point free Cartier divisor classes $D_i$ in $\Pic(P)$.
Then for a generic choice of sections $F_i\in |D_i|$, the scheme-theoretic intersection
$$
X=F_1\cap \cdots \cap F_k
$$
is a disjoint union of a finite set of weak Calabi-Yau varieties of dimension $n$ 
in the sense of Definition \ref{wCY}.
Moreover, $-K_P\vert_{X_0}$ is a nef and big line bundle
on every connected component $X_0$ of $X$.
\end{proposition}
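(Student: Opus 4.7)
My plan is to pull the whole setup back to a log-resolution of $P$, apply Kleiman-Bertini for base-point-free linear systems there, and then descend via adjunction and Serre's normality criterion.

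First I take $\pi \colon \hat P \to P$ a log-resolution of $P$ with exceptional divisor $\sum_\alpha E_\alpha$ simple normal crossing; since $P$ has Gorenstein canonical singularities, $K_{\hat P} = \pi^* K_P + \sum_\alpha a_\alpha E_\alpha$ with all $a_\alpha \geq 0$. The pullback linear systems $\pi^*|D_i|$ are base point free on the smooth variety $\hat P$, so Kleiman-Bertini lets me pick generic $F_i \in |D_i|$ for which the divisors $\hat F_i := \pi^* F_i$ are smooth and $\sum_i \hat F_i + \sum_\alpha E_\alpha$ is SNC. The same transversality-to-$V_\alpha := \pi(E_\alpha)$ argument reproduced in the proof of Theorem \ref{boundFano} shows $\pi^{-1}(F_i) = \pi^*(F_i) = \hat F_i$ scheme-theoretically, so $\pi^{-1}(X) = \hat X := \bigcap_i \hat F_i$, a smooth $n$-dimensional variety on $\hat P$. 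Surjectivity of $\pi|_{\hat X}$ combined with Krull's height inequality then forces $X$ to have pure dimension $n$.

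Next I verify the weak-Calabi-Yau properties in turn. Since the $F_i$ are Cartier in the Gorenstein Cohen-Macaulay variety $P$ of dimension $n+k$, $X$ is a local complete intersection of codimension $k$, hence Gorenstein, and adjunction yields $K_X = (K_P + \sum_i F_i)|_X = 0$. Normality follows from Serre's criterion: $P$ is normal so $\mathrm{Sing}(P)$ has codimension $\geq 2$ in $P$, repeated base-point-free Bertini forces $\dim(X \cap \mathrm{Sing}(P)) \leq n-2$, and off $\mathrm{Sing}(P)$ the intersection $X$ is a transverse complete intersection of smooth divisors. To see canonical singularities I use $\pi|_{\hat X}$ as a resolution of $X$: adjunction on $\hat P$ gives $K_{\hat X} = \sum_\alpha a_\alpha\,(E_\alpha|_{\hat X})$, a nonnegative combination of divisors that remain exceptional over $X$ (each $V_\alpha$ has smaller dimension than $E_\alpha$, and transversality with $\hat X$ preserves that gap). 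Finiteness of connected components is automatic by projectivity, so each $X_0$ is a weak Calabi-Yau of dimension $n$. For the last assertion, $-K_P|_{X_0}$ is base point free hence nef; for bigness, Theorem \ref{boundFano} gives $m$ with $\phi_{|-mK_P|} \colon P \to W$ birational, and if $Z \subsetneq P$ is the proper closed locus where this map fails to be an isomorphism, base-point-free Bertini again forces $\dim(X \cap Z) \leq n-1$, so $\phi_{|-mK_P|}$ restricts to an isomorphism on a dense open of $X_0$ onto an $n$-dimensional image, yielding $(-K_P|_{X_0})^n > 0$.

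The main technical care lies in arranging that a \emph{single} generic tuple $(F_1, \ldots, F_k)$ simultaneously satisfies all of these Bertini-type conditions: smooth and transverse $\hat F_i$ upstairs with SNC compatibility with $\sum_\alpha E_\alpha$, scheme-theoretic equality $\pi^{-1}(F_i) = \hat F_i$, expected-codimension intersections with $\mathrm{Sing}(P)$ and with $Z$, and exceptional-over-$X$ restriction of each $E_\alpha$. Each requirement is an open, nonempty condition on $\prod_i |D_i|$, and the nonemptiness propagates inductively because base-point-freeness of each $|D_i|$ persists when restricted to any of the auxiliary subvarieties (exceptional divisors, singular locus, or $Z$) we need to cut. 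This bookkeeping is the essential but routine part of the argument.
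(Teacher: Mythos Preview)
Your proof is correct and follows essentially the same architecture as the paper's: pull back to a log-resolution $\hat P\to P$, apply Bertini to the base-point-free systems $\pi^*|D_i|$ and their restrictions to the exceptional strata, compute discrepancies of $E_\alpha|_{\hat X}$ via adjunction, and for bigness show that $X_0$ avoids the exceptional locus of a birational morphism given by a multiple of $-K_P$.

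Two small differences are worth flagging. First, you are more explicit than the paper about normality and the Gorenstein property of $X$ (Serre's criterion plus the local-complete-intersection observation); the paper leaves these implicit in the sentence ``$\hat X\to X$ is a resolution of singularities.'' Second, for the bigness of $-K_P|_{X_0}$ you invoke Theorem~\ref{boundFano} (hence the HMX machinery) to produce an $m$ with $\phi_{|-mK_P|}$ birational. The paper instead uses the anticanonical model $\mu\colon P\to \mathrm{Proj}\bigl(\bigoplus_{l\ge 0}H^0(P,-lK_P)\bigr)$, which is automatically a birational morphism because $-K_P$ is semiample and big; this avoids the heavy input and keeps the proposition logically independent of the later boundedness results. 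Your argument is not circular, but the lighter tool is available and preferable here.
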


\begin{proof}
Let $\pi:\hat P \to P$ be a resolution of singularities of $P$ with exceptional divisor $E$ with simple normal crossings. 
The pullbacks of $\pi^* D_i$ to $\hat P$ are base point free divisors which add up to $\pi^*(-K_P)$
and pullbacks of $|D_i|$ give base point free linear systems on $\hat P$. By repeated application of Bertini's theorem, we see that 
for generic $F_i$ the complete intersection of the corresponding sections 
$\pi^*(F_i)$ of $\pi^*(D_i)$ is a smooth $n$-dimensional variety $\hat X$. Moreover, by using Bertini's
theorem for the restrictions of $\pi^*|D_i|$ to the (intersections of) components of $E$ we see that $\pi^*X \cap E$ is  a simple 
normal crossing divisor on $\hat X$.

\smallskip
Thus, $\hat X\to X$ is a resolution of singularities of $X$. The adjunction formula then implies that 
the canonical class of $X$ is $0$. Finally, discrepancies of the components of $X\cap E$ coincide
with discrepancies of components of $E$, which are nonnegative by Definition \ref{bpfbig}.

\smallskip
It remains to show that $-K_P\vert_{X_0}$ is nef and big on $X_0$. It is automatically nef as a pullback of a nef 
divisor. Consider the exceptional locus $G$ of the map 
\begin{equation}\label{mu}
\mu:P\to {\rm Proj}(\bigoplus_{l\geq 0 } H^0(P,-lK_P))
\end{equation}
where we use base point free property of $-K_P$ to show that the graded ring above is finitely generated.
The bpf-big Fano assumption also implies that this map is birational.
We need to show that $X_0$ is not contained in $G$. However, by base point free property of $D_i$ we can assure
that $\dim( X \cap G_j)\leq \dim G_j-k$ for every irreducible component of $G_j$. Thus, $\dim (X_0\cap G_j)<n$ and the 
statement follows.
\end{proof}

\smallskip
\begin{remark}
One can not assume connectedness of complete intersections. For example, complete intersection
on $\CC\PP^4\times \CC \PP^1$ of the $(5,0)$ and $(0,2)$ divisors is a disjoint union of two copies of a quintic
Calabi-Yau threefold. Similarly, it is difficult to assure the vanishing of middle cohomology of $\mathcal O(X)$,
as example of $(3,0)$ and $(0,3)$ divisors in $\CC\PP^2\times \CC\PP^2$ shows.
\end{remark}

\smallskip
\begin{definition}\label{defgen}
We say that a Calabi-Yau complete intersection $X$ in $P$ is generic if it fits the description above 
for some resolution of singularities $\hat P \to P$, and moreover for any subset $I\subseteq \{1,\ldots, k\}$ 
the complete intersection of $\pi^* F_i,i\not\in I$ is smooth on $\hat P$ and is transversal to $E$.
\end{definition}

\section{The main result}\label{secmain}
In this section we prove our main result, namely that weak Calabi-Yau varieties
of dimension $n$
that are obtained as connected components of  complete intersections
of base point free divisors in bpf-big Fano varieties are birationally bounded. The key idea is to reduce the boundedness of complete intersections of dimension $n$ and arbitrary codimension to 
those of codimension at most $n$.

\smallskip
\begin{theorem}\label{gentheorem} 
Let $n$ be a fixed integer. Consider the set of $(n+k)$-dimensional bpf-big
Fano varieties $P$ with decompositions
$$
-K_P=\sum_{i=1}^k D_i
$$
where $D_i$ are base point free Cartier divisors. Then the set of connected components of  
generic complete intersections $X$ of sections of $D_i$ in the sense of Definition \ref{defgen}
is birationally bounded.
\end{theorem}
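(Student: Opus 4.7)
The strategy is to reduce to the case $k \leq n$, where the ambient dimension is at most $2n$. In this base case, Theorem \ref{boundFano} furnishes a birationally bounded family of bpf-big Fano varieties of dimension $n+k$, together with a uniform bound on $(-K_P)^{n+k}$. Since each $D_i$ is nef and $\sum D_i = -K_P$, the $D_i$ lie in a bounded region of the nef cone, so the projective dimensions $\dim|D_i|$ are uniformly bounded. Hence the tuples $(P, D_1, \ldots, D_k, F_1, \ldots, F_k)$ form a family of finite type, and generic complete intersections $X$ (together with their connected components) are birationally bounded.

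For the reduction step $k > n$, I would realize each connected component $X_0$ of $X$ as a component of a generic complete intersection inside a $2n$-dimensional bpf-big Fano variety of the form $Y = \bigcap_{i \in I} F_i$, for an appropriately chosen subset $I \subset \{1,\ldots,k\}$ of size $k-n$. Then $\dim Y = 2n$, adjunction gives $-K_Y = (\sum_{j \notin I} D_j)\vert_Y$ as a sum of $n$ base point free divisors (base point freeness is preserved under restriction), and the arguments of Proposition \ref{genci}, applied via a common log-resolution to the partial intersection, show $Y$ is normal with Gorenstein canonical singularities. The delicate remaining condition --- and the main obstacle --- is bigness of $-K_Y$.

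To choose $I$ forcing this bigness, I would exploit Proposition \ref{genci} once more: each $-K_P\vert_{X_0}$ is nef and big on the $n$-dimensional component $X_0$, so
\[
0 \;<\; (-K_P\vert_X)^n \;=\; \Bigl(\sum_{j} D_j\Bigr)^n \cdot \prod_i D_i \;=\; \sum_{\beta:\,\sum_i \beta_i = n} \binom{n}{\beta}\prod_i D_i^{\beta_i + 1}
\]
in $P$, a sum of nonnegative monomials (all $D_i$ are nef). Some multi-index $\beta$ yields a strictly positive term; setting $\gamma_i := \beta_i + 1$, the multi-index $\gamma$ satisfies $\gamma_i \geq 1$ for all $i$, with $\gamma_i = 1$ at the (at least $k-n$) indices where $\beta_i = 0$. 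Take $I$ to be any $(k-n)$-subset of $\{i : \gamma_i = 1\}$. Then in the expansion of $(-K_Y)^{2n} = (\sum_{j \notin I} D_j)^{2n} \cdot \prod_{i \in I} D_i$ (computed in $P$ using $[Y] = \prod_{i \in I} D_i$), the term indexed by $(\gamma_j)_{j \notin I}$ equals a positive multinomial coefficient times $\prod_i D_i^{\gamma_i} > 0$ (noting $\sum_{j \notin I} \gamma_j = 2n$), while all other terms are nonnegative. Hence $-K_Y$ is big, $Y$ is a bpf-big Fano of dimension $2n$, and applying the base case to $(Y, (D_j\vert_Y)_{j \notin I}, (F_j\vert_Y)_{j \notin I})$ yields birational boundedness of $X_0$. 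A minor technical wrinkle --- possible reducibility of $Y$ --- is handled by restricting the positivity computation to the component of $Y$ containing $X_0$, which inherits the same positive intersection number.
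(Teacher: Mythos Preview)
Your reduction step --- selecting a subset of the $F_i$ so that the partial intersection is again a bpf-big Fano of dimension at most $2n$ --- is exactly the paper's key idea, and your positivity argument for choosing the subset is essentially the same multinomial expansion the paper uses. One caveat: you should run the positivity computation against the cycle $X_0$ rather than against $[X]=\prod_i D_i$. The positive monomial you extract from the global expansion need not remain positive when paired with the particular component $Y_0\ni X_0$; the paper avoids this by working with $(\sum_i D_i)^n\cdot X_0>0$ from the outset and carrying $X_0$ through the inequality chain, which disposes of your ``minor wrinkle'' cleanly.

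The genuine gap is your base case. Theorem~\ref{boundFano} gives only \emph{birational} boundedness of the ambient varieties: a family $Z\to T$ whose fibers are birational, not isomorphic, to the various $P$. You therefore have no finite-type parameter space for the $P$ themselves, and hence no way to parameterize Cartier divisor classes $D_i$ on them or sections $F_i\in|D_i|$. The claim ``the $D_i$ lie in a bounded region of the nef cone, so $\dim|D_i|$ is uniformly bounded, hence the tuples form a family of finite type'' does not follow from what is available: even granting a bound on $h^0(P,D_i)$ (which itself needs justification beyond bounding $(-K_P)^{n+k}$), you cannot assemble a finite-type family of tuples without first having a bounded --- not merely birationally bounded --- family of $P$.

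The paper sidesteps this entirely. After producing the intermediate bpf-big Fano $P_0$ of dimension $n+|I|\le 2n$, it does \emph{not} attempt to parameterize $(P_0,D_i,F_i)$. It invokes Theorem~\ref{boundFano} only for its numerical consequence that $(-K_{P_0})^{n+|I|}$ is bounded, and then observes from the same inequality chain that
\[
0 \;<\; \Bigl(\sum_{i\in I}D_i\Big\vert_{X_0}\Bigr)^{n} \;\le\; (-K_{P_0})^{\,n+|I|}.
\]
Thus $X_0$ itself carries a base point free big divisor of bounded volume, and Theorem~\ref{boundCY} applied directly to $X_0$ finishes the argument. Once the reduction is in hand, no separate ``base case'' is needed at all.
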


\begin{proof}
Because the divisors $D_i$ are Cartier, the intersection numbers $(\prod_i {D_i}^{s_i})Y$ for $\sum_i s_i=l$
are well-defined integers for any $l$-cycle $Y\in A_l(P)$. Moreover, the base point free assumption on $D_i$ implies that $(\prod_i {D_i}^{s_i})Y \geq 0$ for an effective $Y$.
The last sentence of Proposition \ref{genci}  
implies that 
$$
(\sum_{i=1}^k D_i)^{n}X_0 > 0,
$$
thus there exists a subset $I\subseteq \{1,\ldots, k\}$ of cardinality at most $n$ and positive integers $s_i,i\in I$ with $\sum_{i\in I}s_i=n$ such that
%\begin{equation}\label{big}
$$(\prod_{i\in I} D_i^{s_i}) X_0 >0.$$
%\end{equation}

\smallskip
Consider the connected component $P_0\supseteq X_0$ of the scheme-theoretic complete intersection 
$$P_1= \bigcap_{i\not\in I}  F_i$$
in $P$.  We note that $X_0$ is a connected component of the complete intersection of $D_i,i\in I$
on $P_0$. By Definition \ref{defgen} and arguments similar to Proposition \ref{genci}, we see that 
$P_0$ has at worst Gorenstein canonical singularities and $-K_{P_0} = \sum_{i\in I} D_i\vert_{P_0}$
is nef. Moreover,
\begin{equation}\label{boundvol}
%(-K_{P_0})^{n+|I|} P_0 = 
(\sum_{i\in I} D_i)^{n+|I|}P_0\geq 
(\sum_{i\in I} D_i)^{n} (\prod_{i\in I} D_i) P_0\\
\geq (\sum_{i\in I}D_i)^nX_0
 \geq
(\prod_{i\in I} D_i^{s_i} )X_0 >0
\end{equation}
which implies that $P_0$ is a bpf-big Fano variety of dimension $n+|I|\in [n+1,2n]$.

\smallskip
Theorem \ref{boundFano} now  implies that 
$(-K_{P_0})^{n+|I|}$ is bounded by a constant that depends only on $n$. 
The divisor $-K_{P_0}\vert_{X_0} = \sum_{i\in I} D_i\vert_{X_0}$ is base point free.
Moreover, the inequalities  \eqref{boundvol} imply that 
$$
(-K_{P_0})^{n+|I|}\geq (\sum_{i\in I} D_i\vert_{X_0})^n >0
$$
so $-K_{P_0}\vert_{X_0} $ is big and of bounded volume. 
Theorem \ref{boundCY} now finishes the proof.
\end{proof}

\smallskip
While it is satisfying to know that generic weak Calabi-Yau complete intersections of given dimension
and arbitrary codimension are birationally bounded, we want to also exclude the possibility of getting birationally unbounded sets
of weak Calabi-Yau varieties by non-generic complete intersections in some Fano 
varieties. Of course, for an individual $P$ these non-generic complete intersections are degenerations of 
the generic ones, but it is a priori conceivable that one can get more and more complicated degenerations
as one considers subfamilies of a given family (because it is not clear if we have any separatedness of the
moduli of Calabi-Yau varieties). The following theorem establishes this stronger result.

\begin{theorem}\label{main}
Consider the set of $n$-dimensional varieties $X_0$ with Gorenstein canonical singularities and trivial canonical class which can be 
written as a connected component of a scheme-theoretic intersection of 
base point free divisors $F_i\in |D_i|$ in an $(n+k)$-dimensional bpf-big Fano variety $P$
with $\sum_{i=1}^k D_i =-K_P$. Then this set is birationally bounded for any $n$ irrespective of $k$. 
\end{theorem}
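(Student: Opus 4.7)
The plan is to apply Theorem~\ref{boundCY} to each such $X_0$ by exhibiting on $X_0$ a base point free big Cartier divisor of volume bounded above by a constant depending only on $n$. The natural candidate is $L := \sum_{i\in I} D_i|_{X_0}$ for a suitably chosen subset $I \subseteq \{1,\ldots,k\}$ with $|I|\leq n$; any such partial sum is automatically base point free and Cartier on $X_0$.

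The core of the argument would mimic the proof of Theorem~\ref{gentheorem}. Namely, one selects $I$ of size at most $n$ together with positive integers $s_i$ $(i\in I)$ summing to $n$ such that $\bigl(\prod_{i\in I} D_i^{s_i}\bigr)\cdot[X_0]>0$, and considers the connected component $P_0 \supseteq X_0$ of $\bigcap_{i\notin I} F_i$. Provided that $P_0$ can be shown to be a bpf-big Fano of dimension $n+|I|\leq 2n$ with $-K_{P_0} = \sum_{i\in I} D_i|_{P_0}$, Theorem~\ref{boundFano} bounds $(-K_{P_0})^{\dim P_0}$ by a constant $c(n)$, and the chain of inequalities in the proof of Theorem~\ref{gentheorem} yields $L^n\leq c(n)$ as well as $L^n>0$, so $L$ is big. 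Theorem~\ref{boundCY} then finishes.

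The non-generic nature of the $F_i$'s has to be handled by deformation: for each $i\notin I$, replace $F_i$ by a generic element of the sub-linear system $V_i\subseteq|D_i|$ of sections vanishing on $X_0$, which is nonempty since $F_i\in V_i$. A Bertini-type argument applied to the $V_i$'s on $P\setminus X_0$, combined with a log-resolution of $P$ in the style of Proposition~\ref{genci}, should ensure that for generic choices the component $P_0\supseteq X_0$ of the modified partial intersection is a bpf-big Fano of the required dimension, with $-K_{P_0}=\sum_{i\in I}D_i|_{P_0}$ base point free.

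The main obstacle is the existence of a suitable $I$. This requires $-K_P|_{X_0}$ to be big on $X_0$. Since $-K_P|_{X_0}$ is nef (as the pullback of a base point free divisor), the failure of bigness would force $X_0$ into the exceptional locus of the birational morphism $\mu\colon P\to \mathrm{Proj}\bigl(\bigoplus_{l\geq 0}H^0(P,-lK_P)\bigr)$. In that degenerate situation the argument above has to be supplemented by descending to an irreducible component of $\mathrm{Exc}(\mu)$ containing $X_0$ and inducting on the dimension of the ambient variety, or by establishing directly that such a degenerate $X_0$ is incompatible with the weak Calabi--Yau hypothesis on the connected component. Showing that this degenerate case can be absorbed cleanly is the crux of the proof.
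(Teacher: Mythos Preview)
Your outline tracks the strategy of Theorem~\ref{gentheorem} faithfully, and you correctly isolate the genuine obstacle: without knowing that $-K_P\vert_{X_0}$ is big, you cannot even begin to choose the index set $I$. Your proposed fixes for this degenerate case (induction on an exceptional component, or an incompatibility with the weak Calabi--Yau hypothesis) are not developed, and I do not see how to make either of them work: a component of $\mathrm{Exc}(\mu)$ has no reason to be bpf-big Fano, and there is no evident obstruction to a weak Calabi--Yau sitting inside the exceptional locus a priori.

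The paper resolves this crux by a different and much cleaner deformation than the one you suggest. Rather than fixing $X_0$ and perturbing only the $F_i$ with $i\notin I$ inside the sublinear systems $V_i\subseteq |D_i|$ vanishing on $X_0$, the paper perturbs \emph{all} of the $F_i$ freely in $|D_i|$. One forms the one-parameter family $Y=\bigcap_i\{uF_i+v\tilde F_i=0\}\subset P\times\PP^1$ with $\tilde F_i$ generic, and lets $X_\epsilon$ be the fiber over $(1:\epsilon)$ of the component $Y_0$ through $X_0\times\{(1:0)\}$. Connectedness of $X_0$ gives $h^0(\Oa_{X_0})=1$, so by upper semicontinuity the generic $X_\epsilon$ is a \emph{single} connected component of a generic complete intersection. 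Now Theorem~\ref{gentheorem} applies directly to $X_\epsilon$ and produces the set $I$ and the bounds. The key point is that $[X_0]$ and $[X_\epsilon]$ are rationally equivalent $n$-cycles in $P$, so $(\sum_{i\in I}D_i)^n\cdot X_0=(\sum_{i\in I}D_i)^n\cdot X_\epsilon$; the right side is positive and bounded, hence so is the left, and Theorem~\ref{boundCY} finishes. In particular the bigness of $-K_P\vert_{X_0}$ falls out for free from rational equivalence, and the question you flag as ``the crux'' simply does not arise.

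Your alternative deformation (inside the $V_i$) would face further difficulties beyond the one you flagged. Since $X_0$ lies in the base locus of each $V_i$, Bertini on $P\setminus X_0$ gives no control on the singularities of $P_0$ along $X_0$, nor on the dimension of $P_0$ (other components of $\bigcap_{i\notin I}F_i$ could have dimension larger than $n+|I|$, and it is not clear your perturbation eliminates this while keeping $X_0$ inside). Moreover, the inequality chain \eqref{boundvol} needs $[X_0]$ to be dominated by $(\prod_{i\in I}D_i)\cdot[P_0]$ against nef classes, which in the non-generic setting (with the original $F_i$, $i\in I$) would require the intersection to have the expected dimension on $P_0$. The paper's route via rational equivalence with a fully generic $X_\epsilon$ sidesteps all of this at once.
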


\begin{proof}
The main idea of the proof is that because $X_0$ is reduced, once one starts to deform 
$F_i$ in their linear system, $X_0$ will
deform to a single  connected component of a generic complete intersection as opposed to a union of several 
connected components. We will be more specific below.

\smallskip
Pick sections $\tilde F_i$ of $D_i$ so that for general $\epsilon$ the complete intersection
of $F_i+\epsilon \tilde F_i$ is generic in the sense of Definition \ref{defgen}.
Consider the scheme theoretic intersection 
$$Y=\bigcap_{i=1}^k\{uF_i+v\tilde F_i=0\}$$
in $P\times \PP^1$ where $\PP^1$ has 
homogenous coordinates $(u:v)$.
Let $Y_0$ be the irreducible component of $Y$ that contains $X_0\times \{(1:0)\}$. 
Note that $Y_0\to \PP^1$ is flat and  $X_0$ is the fiber of it over $(1:0)$.
Since $h^0(X_0,{\mathcal O}_{X_0})=1$, by flatness and upper semicontinuity 
we see that for a generic fiber $X_\epsilon$ there holds $h^0(X_\epsilon,{\mathcal O}_{X_\epsilon})=1$.
Thus for a general $\epsilon$ the fiber $X_\epsilon$ consists of one connected component 
of complete intersection rather  than several. 
We also see that $X_0$ and $X_\epsilon$ are rationally equivalent $n$-cycles in $P$.

\smallskip
By the argument of the proof of Theorem \ref{gentheorem} there exists a set $I$ of cardinality at most $n$ such that 
$\sum_{i\in I}D_i$ resrticts to a nef and big divisor on $X_\epsilon$. Since $X_0$ and $X_\epsilon$ 
are rationally equivalent cycles in $P$, we have  
$(\sum_{i\in I}D_i)^nX_0=(\sum_{i\in I}D_i)^nX_\epsilon$. The latter is positive, but bounded by a constant that depends only on $n$, see \eqref{boundvol} in
the proof of Theorem \ref{gentheorem}. Thus $\sum_{i\in I} D_i\vert_{X_0}$ is big (and is base point free because 
$D_i$ are) with bounded volume, so 
Theorem \ref{boundCY} finishes the proof.
\end{proof}

\smallskip
An immediate corollary of the above theorems is the following.
\begin{corollary}\label{finitebabo}
Batyrev-Borisov construction produces only a birationally bounded set of Calabi-Yau varieties in any given dimension. In particular, the stringy Hodge numbers of varieties obtained by Batyrev-Borisov 
constructions are bounded in any given dimension.
\end{corollary}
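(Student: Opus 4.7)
My plan is to derive Corollary \ref{finitebabo} essentially for free from Theorem \ref{main} and Corollary \ref{boundhodge}. The only substantive task is to verify that each Calabi-Yau variety produced by the Batyrev-Borisov construction fits the hypotheses of Theorem \ref{main}; once this is done, everything else is a citation.

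First I would unpack the Batyrev-Borisov data. One starts with a reflexive polytope $\Delta$ in a lattice $M$ together with a nef partition, namely a Minkowski decomposition of $\Delta$ which induces a splitting $-K_P = D_1 + \cdots + D_k$ of the anticanonical class of the associated Gorenstein toric Fano variety $P$ into a sum of base point free (torically semi-ample) Cartier divisors $D_i$. One may replace $P$ by any of its partial projective crepant resolutions without changing the birational equivalence class of the Calabi-Yau output, and without violating any condition in Definition \ref{bpfbig}. By the toric Example immediately after Definition \ref{bpfbig}, every such $P$ is a bpf-big Fano variety. A Batyrev-Borisov Calabi-Yau variety of dimension $n$ is then, by construction, a connected component of the scheme-theoretic intersection of generic sections $F_i \in |D_i|$ in this $(n+k)$-dimensional $P$ with $\sum_{i=1}^k D_i = -K_P$. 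Thus it lies in exactly the class of varieties controlled by Theorem \ref{main}, and birational boundedness follows immediately, uniformly in the codimension $k$.

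For the statement about stringy Hodge numbers, I would use the fact --- visible inside the proof of Theorem \ref{main} through the choice of a subset $I \subseteq \{1,\ldots,k\}$ of size at most $n$ and inequality $(\ref{boundvol})$ --- that each such $X_0$ carries a base point free big Cartier divisor $\sum_{i \in I} D_i\vert_{X_0}$ whose volume is bounded by a constant depending only on $n$ (the bound comes from Theorem \ref{boundFano} applied to $P_0$). Hence every $n$-dimensional Batyrev-Borisov Calabi-Yau lies in some ${\mathfrak N}_{n,d(n)}$, and Corollary \ref{boundhodge} then guarantees that only finitely many stringy $E$-functions occur. In particular the stringy Hodge numbers take finitely many values and are therefore bounded in terms of $n$ alone.

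The main (indeed essentially the only) obstacle is bookkeeping: confirming that the construction of \cite{BaBo} really does present its output as a connected component of a scheme-theoretic complete intersection of base point free Cartier divisors summing to $-K_P$ inside a bpf-big Fano variety. This is standard once one allows oneself to pass to a partial crepant resolution of the ambient toric Gorenstein Fano variety, and it is precisely what is packaged in the Example of Section \ref{sec.term}.
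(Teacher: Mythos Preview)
Your proposal is correct and follows essentially the same approach as the paper: identify the Batyrev-Borisov output as a special case of the complete intersection setup in Theorem \ref{main} with $P$ toric bpf-big Fano, then invoke Corollary \ref{boundhodge} (via the bounded-volume base point free divisor produced inside the proof of Theorem \ref{main}) for the stringy Hodge numbers. The paper's proof is a terse two-sentence version of exactly this; your added unpacking of the nef partition and the passage to partial crepant resolutions is accurate and compatible with the Example in Section \ref{sec.term}.
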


\begin{proof}
Batyrev-Borisov construction is a particular case of the complete intersection construction 
when the ambient bpf-big Fano variety $P$ is toric. The argument of Corollary \ref{boundhodge}
then allows one to conclude that stringy $E$-functions and thus Hodge numbers of these varieties 
are bounded.
\end{proof}

\begin{remark}
While stringy $E$-functions in Batyrev-Borisov construction are polynomial \cite{BaBo}, they are not expected to
be so in general, see examples of \cite{DR}.
\end{remark}

\section{Comments and questions.}\label{seccomm}
In this section we make several comments and highlight some natural open questions.

\medskip
A complete intersection construction of Calabi-Yau varieties $X$ may be generalized to sections 
of globally generated vector bundles of rank $k$  and $c_1=-K_P$ on $n+k$ dimensional Fano varieties $P$.
Our methods do not appear to yield (birational) boundedness of such $X$, although it can be reasonably expected.

\medskip
One can not expect boundedness of families of complete intersections with trivial canonical 
class in varieties $P$ with $-K_P$ nef but not big. For example, $\PP^2\times S$ for $K3$ surfaces 
$S$ are not birationally bounded and neither are the anticanonical hypersurfaces in them.

\medskip
Our results imply that there is a bound $N$ such that a $K3$ surface with Picard group of rank one 
and generator $D$ with $D^2>N$ is never a base point free complete intersection in a bpf-big Fano variety.
Indeed, in dimension two birational equivalence coincides with $K$-equivalence, so 
smooth birational $K3$ surfaces are isomorphic.
We do not venture an estimate on such $N$, since the methods of this paper are not suitable for providing
a reasonable effective bound. Certainly, if a generic polarized K3 surface
with a given $D^2=2l$ could be written as a complete intersection, then the moduli space of such surfaces
would be unirational, which is known to fail for large enough $l$, see \cite{GHuS}. 
A similar statement can be made for polarized abelian surfaces.

\medskip
Our result for complete intersections in toric varieties is obtained by a decidedly non-toric method.
It would be interesting to see a more combinatorial proof of it. In particular, our main idea is that 
a generic dimension $n$ complete intersection of arbitrary codimension in a bpf-big Fano variety
can be realized as 
a complete intersection of codimension at most $n$. Is there a similar but perhaps 
larger bound to assure that a generic complete intersection in a toric Fano variety can be reduced 
to a complete intersection in toric Fano variety of bounded codimension?

\medskip
Batyrev-Borisov construction provides analogs of Hodge numbers for pairs of reflexive Gorenstein cones.
In the case when one can associate Calabi-Yau complete intersections to these cones, these Hodge numbers
are the stringy Hodge numbers of these intersections and are thus bounded. It is at the moment an open 
question whether the boundedness extents to the reflexive Gorenstein cone construction. This is important, because conjecturally  this construction provides more general examples of conformal field theories.

\end{document}